\patchcmd\Gread@eps{\@inputcheck#1 }{\@inputcheck"#1"\relax}{}{}
\newcommand{\cp}{\ensuremath{\mathbin{\Box}}}
\newcommand{\Aut}{\ensuremath{\operatorname{Aut}}}
\newcommand{\sym}{\mathrm{Sym}}
\newcommand{\End}{\ensuremath{\operatorname{End}}}
\newcommand{\Prob}{\ensuremath{\operatorname{Pr}}}
\newcommand{\id}{\ensuremath{\text{\rm id}}}
\newcommand{\dist}{\ensuremath{\operatorname{dist}}}
\newcommand{\cg}{\overline{G}}
\newcommand{\D}{\mathrm{D}}
\newcommand{\strong}{\boxtimes}
\newcommand{\supp}{\mathrm{Supp}}
\newcommand\stackplus[1]{\makebox[0ex][l]{$+$} \raisebox{-.75ex}{\makebox[2ex]{$_{#1}$}}}
\newcommand{\rdi}{\mu}
\newcommand {\md}{{\rm mod}\,}
\newcommand{\gst}{\gamma_{\rm st}}
\newcommand\F{F}
\newcommand\Pf{Pf}
\DeclareMathOperator{\Frob}{Frob}
\def\ld{\mbox{ld}\,}
\definecolor{cupgreen}{rgb}{0,0.498,0.208}
\definecolor{cupblue}{rgb}{0,0,.5}
\definecolor{cupred}{rgb}{1,0.04,0}
\definecolor{cuppink}{rgb}{0.925,0,0.545}
\definecolor{cupmagenta}{rgb}{0.624,0.161,0.424}
\definecolor{cupbrown}{rgb}{0.71,0.212,0.133}
\definecolor{cupgreen}{rgb}{0,0,0}
\definecolor{cupblue}{rgb}{0,0,0}
\definecolor{cupred}{rgb}{0,0,0}
\definecolor{cuppink}{rgb}{0,0,0}
\definecolor{cupmagenta}{rgb}{0,0,0}
\definecolor{cupbrown}{rgb}{0,0,0}
\definecolor{TITLE}{rgb}{0,0,0}
\definecolor{midblue}{rgb}{0.00,0.0,0.80}
\definecolor{darkblue}{rgb}{0.00,0.00,0.45}
\definecolor{SECTION}{rgb}{0.50,0.00,1.00}
\definecolor{THM}{rgb}{0.8,0,0.1}
\definecolor{SEC}{rgb}{0,0,1}
\newcommand{\A}{\mathcal A}
\newcommand{\B}{\mathcal B}
\newcommand{\M}{\mathcal M}
\newcommand{\N}{\mathcal N}
\newcommand{\p}{\mathfrak p}
\newcommand{\q}{\mathfrak q}
\newcommand{\aut}{\mathrm{Aut}}
\newcommand{\m}{\mathrm{M}}
\newcommand{\stab}{\mathrm{Stab}}
\renewcommand{\baselinestretch}{1.05}\normalsize
\newtheorem{theorem}{{\color{THM} Theorem}}[section]
\DeclareRobustCommand{\stirling}{\genfrac\{\}{0pt}{}}
\newtheorem{corollary}[theorem]{{\color{THM}Corollary}}
\theoremstyle{definition}
\newtheorem{remark}[theorem]{{\color{THM}Remark}}
\numberwithin{equation}{section}
\renewcommand{\refname}{{\color{SEC} References}}
\newcommand{\f}{\rightarrow}
\newcommand{\la}{\langle}
\newcommand{\ra}{\rangle}
\date{}
\title{Strong domination number of Haj\'{o}s sum and vertex-sum of two graphs}
\author[1]{{ Nima Ghanbari\thanks{ E-mail: nima.ghanbari@uib.no}}}
\affil[1]{{\small Department of Informatics, University of Bergen, P.O. Box 7803, 5020 Bergen, Norway}}
\author[2]{{ Saeid Alikhani \thanks{Corresponding author.~Email: alikhani@yazd.ac.ir}}}
\affil[2]{{\small Department of Mathematical Sciences, Yazd University, Yazd, Iran}}
\begin{document}

	\maketitle
\begin{abstract}
  Let $G=(V,E)$ be a simple graph. A set $D\subseteq V$ is a strong dominating set of $G$, if for every vertex $x\in V\setminus D$ there is a vertex $y\in D$ with $xy\in E(G)$ and $\deg(x)\leq \deg(y)$. The strong domination number $\gst(G)$ is defined as the minimum cardinality of a strong dominating set.  In this paper, we study the strong domination number of Haj\'{o}s sum and vertex-sum of two graphs. 
\end{abstract}

\noindent{\bf Keywords:}  Strong domination number, strong dominating set,  Haj\'{o}s sum, vertex-sum.

\medskip
\noindent{\bf AMS Subj.\ Class.: } 05C15, 05C25.

\section{Introduction}

A dominating set of a graph $G=(V,E)$ is a subset $D$ of $V$ such that every vertex in $V\setminus D$ is adjacent to at least one member of $D$.  
The minimum cardinality of all dominating sets of $G$ is called  the  domination number of $G$ and is denoted by $\gamma(G)$. This parameter has  been extensively studied in the literature and there are  hundreds of papers concerned with domination.  
We recommend a fundamental book \cite{domination} about domination in general. 
The various different domination concepts are
well-studied now, however new concepts are introduced frequently and the interest is growing
rapidly.

 A set $D\subseteq V$ is a \emph{strong dominating set} of $G$, if for every vertex $x\in  \overline{D}=V\setminus D$ there is a vertex $y\in D$ with $xy\in E(G)$ and $\deg(x)\leq \deg(y)$. The \emph{strong domination number} $\gst(G)$ is defined as the minimum cardinality of a strong dominating set. A $\gst$-\emph{set} of $G$ is a strong dominating set of $G$ of minimum cardinality $\gst(G)$. If $D$ is a strong dominating set in a graph $G$, then we say that a vertex $u \in \overline{D}$ is \emph{strong dominated} by a vertex $v \in D$ if $uv\in E(G)$, and $\deg(u)\leq \deg(v)$.

The strong domination number was introduced in \cite{DM} and some upper bounds on this parameter presented in \cite{DM2,DM}. Similar to strong domination number, a set $D\subset V$  is a weak  dominating set of $G$, if every vertex $v\in V\setminus S$  is
adjacent to a vertex $u\in D$ such that $deg(v)\geq deg(u)$ (see \cite{Boutrig}). The minimum cardinality of a weak dominating set of $G$ is denoted by $\gamma_w(G)$. Boutrig and  Chellali proved that the relation $\gamma_w(G)+\frac{3}{\Delta+1}\gamma_{st}(G)\leq n$ holds for any connected graph of order $n\geq 3.$ Alikhani, Ghanbari and Zaherifard \cite{sub} examined the effects on $\gamma_{st}(G)$ when $G$ is modified by the edge deletion, the edge subdivision and the edge contraction. Also they studied the strong domination number of $k$-subdivision of $G$.   

Motivated by enumerating of  the number of dominating sets of a graph and domination polynomial (see e.g. \cite{euro,saeid1}), the enumeration of  the strong dominating sets for certain
graphs has studied in \cite{JAS}. 
Study of the strong domination number of graph operation is a natural and interesting subject and for join and corona product has studied (\cite{JAS}).  
In this paper, we consider other kinds of graph operations which are  called Haj\'{o}s sum and vertex sum of two graphs. The Haj\'{o}s sum  is useful
when either of the network is disrupted and certain node(s)
is(are) not functioning. Then that node(s) is(are) to be identified(fused) with the node of a network which is functioning
properly and thus new network is constructed.

\section{ Haj\'{o}s sum}

In this section, we study the strong domination number of  Haj\'{o}s sum of two graphs. First we recall its definition. 
Given graphs $G_1 = (V_1,E_1)$ and $G_2 = (V_2, E_2)$
with disjoint vertex sets, an edge $x_1y_1\in E_1$, and an edge $x_2y_2\in E_2$, the
\emph{Haj\'{o}s sum} $G_3 = G_1(x_1y_1) +_H G_2(x_2y_2)$ is the graph obtained as follows:
begin with $G_3 = (V_1 \cup V_2, E_1 \cup E_2)$; then in $G_3$ delete the edges $x_1y_1$ and
$x_2y_2$, identify the vertices $x_1$ and $x_2$ as $v_H(x_1x_2)$, and add the edge $y_1y_2$~\cite{HAJOSSUM}.
Figure~\ref{HaJ-K6C6} shows the Haj\'{o}s sum  of $K_6$ and $C_6$ 
with respect to $x_1y_1$ and $x_2y_2$.

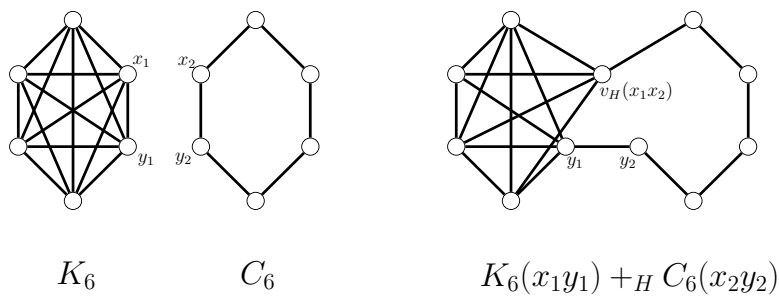
\begin{figure}
\begin{center}
\psscalebox{0.6 0.6}
{
\begin{pspicture}(0,-6.9293056)(16.402779,-0.66791654)
\psline[linecolor=black, linewidth=0.06](2.601389,-2.0693054)(2.601389,-3.6693053)(1.4013889,-4.8693056)(0.20138885,-3.6693053)(0.20138885,-2.0693054)(1.4013889,-0.86930543)(2.601389,-2.0693054)(2.601389,-2.0693054)
\psline[linecolor=black, linewidth=0.06](1.4013889,-0.86930543)(0.20138885,-3.6693053)(2.601389,-3.6693053)(1.4013889,-0.86930543)(1.4013889,-0.86930543)
\psline[linecolor=black, linewidth=0.06](0.20138885,-2.0693054)(2.601389,-2.0693054)(1.4013889,-4.8693056)(0.20138885,-2.0693054)(0.20138885,-2.0693054)
\psline[linecolor=black, linewidth=0.06](0.20138885,-3.6693053)(2.601389,-2.0693054)(2.601389,-2.0693054)
\psline[linecolor=black, linewidth=0.06](0.20138885,-2.0693054)(2.601389,-3.6693053)(2.601389,-3.6693053)
\psline[linecolor=black, linewidth=0.06](4.201389,-2.0693054)(5.4013886,-0.86930543)(6.601389,-2.0693054)(6.601389,-3.6693053)(5.4013886,-4.8693056)(4.201389,-3.6693053)(4.201389,-2.0693054)(4.201389,-2.0693054)
\psdots[linecolor=black, dotstyle=o, dotsize=0.4, fillcolor=white](0.20138885,-2.0693054)
\psdots[linecolor=black, dotstyle=o, dotsize=0.4, fillcolor=white](2.601389,-2.0693054)
\psdots[linecolor=black, dotstyle=o, dotsize=0.4, fillcolor=white](0.20138885,-3.6693053)
\psdots[linecolor=black, dotstyle=o, dotsize=0.4, fillcolor=white](2.601389,-3.6693053)
\psdots[linecolor=black, dotstyle=o, dotsize=0.4, fillcolor=white](5.4013886,-0.86930543)
\psdots[linecolor=black, dotstyle=o, dotsize=0.4, fillcolor=white](4.201389,-2.0693054)
\psdots[linecolor=black, dotstyle=o, dotsize=0.4, fillcolor=white](6.601389,-2.0693054)
\psdots[linecolor=black, dotstyle=o, dotsize=0.4, fillcolor=white](6.601389,-3.6693053)
\psdots[linecolor=black, dotstyle=o, dotsize=0.4, fillcolor=white](5.4013886,-4.8693056)
\psdots[linecolor=black, dotstyle=o, dotsize=0.4, fillcolor=white](4.201389,-3.6693053)
\rput[bl](1.0413889,-6.809305){\LARGE{$K_6$}}
\rput[bl](5.061389,-6.809305){\LARGE{$C_6$}}
\rput[bl](10.321389,-6.9293056){\LARGE{$K_6(x_1y_1)+_H C_6(x_2y_2)$}}
\rput[bl](12.941389,-2.6493053){$v_H(x_1x_2)$}
\rput[bl](2.7013888,-1.9093055){$x_1$}
\rput[bl](3.7013888,-1.9293054){$x_2$}
\rput[bl](3.641389,-4.0693054){$y_2$}
\rput[bl](2.821389,-4.0893054){$y_1$}
\psline[linecolor=black, linewidth=0.06](11.001389,-0.86930543)(9.801389,-3.6693053)(12.201389,-3.6693053)(11.001389,-0.86930543)(11.001389,-0.86930543)
\psline[linecolor=black, linewidth=0.06](9.801389,-2.0693054)(12.201389,-3.6693053)(12.201389,-3.6693053)
\psline[linecolor=black, linewidth=0.06](1.4013889,-0.86930543)(1.4013889,-4.8693056)(1.4013889,-4.8693056)
\psline[linecolor=black, linewidth=0.06](11.001389,-0.86930543)(11.001389,-4.8693056)(11.001389,-4.8693056)
\psline[linecolor=black, linewidth=0.06](11.001389,-0.86930543)(9.801389,-2.0693054)(9.801389,-3.6693053)(11.001389,-4.8693056)(11.001389,-4.8693056)
\psline[linecolor=black, linewidth=0.06](11.001389,-4.8693056)(12.201389,-3.6693053)(12.201389,-3.6693053)
\psline[linecolor=black, linewidth=0.06](13.801389,-3.6693053)(15.001389,-4.8693056)(16.20139,-3.6693053)(16.20139,-2.0693054)(15.001389,-0.86930543)(15.001389,-0.86930543)
\psline[linecolor=black, linewidth=0.06](12.201389,-3.6693053)(13.801389,-3.6693053)(13.801389,-3.6693053)
\psline[linecolor=black, linewidth=0.06](15.001389,-0.86930543)(13.001389,-2.0693054)(13.001389,-2.0693054)
\psline[linecolor=black, linewidth=0.06](11.001389,-0.86930543)(13.001389,-2.0693054)(13.001389,-2.0693054)
\psline[linecolor=black, linewidth=0.06](9.801389,-2.0693054)(13.001389,-2.0693054)(9.801389,-3.6693053)(9.801389,-3.6693053)
\psline[linecolor=black, linewidth=0.06](13.001389,-2.0693054)(11.001389,-4.8693056)(11.001389,-4.8693056)
\psdots[linecolor=black, dotstyle=o, dotsize=0.4, fillcolor=white](13.001389,-2.0693054)
\psdots[linecolor=black, dotstyle=o, dotsize=0.4, fillcolor=white](12.201389,-3.6693053)
\psdots[linecolor=black, dotstyle=o, dotsize=0.4, fillcolor=white](13.801389,-3.6693053)
\psdots[linecolor=black, dotstyle=o, dotsize=0.4, fillcolor=white](11.001389,-4.8693056)
\psdots[linecolor=black, dotstyle=o, dotsize=0.4, fillcolor=white](9.801389,-3.6693053)
\psdots[linecolor=black, dotstyle=o, dotsize=0.4, fillcolor=white](9.801389,-2.0693054)
\psdots[linecolor=black, dotstyle=o, dotsize=0.4, fillcolor=white](11.001389,-0.86930543)
\psdots[linecolor=black, dotstyle=o, dotsize=0.4, fillcolor=white](15.001389,-0.86930543)
\psdots[linecolor=black, dotstyle=o, dotsize=0.4, fillcolor=white](16.20139,-2.0693054)
\psdots[linecolor=black, dotstyle=o, dotsize=0.4, fillcolor=white](16.20139,-3.6693053)
\psdots[linecolor=black, dotstyle=o, dotsize=0.4, fillcolor=white](15.001389,-4.8693056)
\psdots[linecolor=black, dotstyle=o, dotsize=0.4, fillcolor=white](1.4013889,-0.86930543)
\psdots[linecolor=black, dotstyle=o, dotsize=0.4, fillcolor=white](1.4013889,-4.8693056)
\rput[bl](12.221389,-4.1693053){$y_1$}
\rput[bl](13.381389,-4.1693053){$y_2$}
\end{pspicture}
}
\end{center}
\caption{Haj\'{o}s construction of $K_6$ and $C_6$.} \label{HaJ-K6C6}
\end{figure}

The following theorem gives the lower bound and the upper bound for the strong domination number of  Haj\'{o}s sum of two graphs.

	\begin{theorem}\label{thm:Hajos}
Let $G_1=(V_1,E_1)$ and $G_2=(V_2,E_2)$ be two graphs with disjoint 
vertex sets, $x_1y_1\in E_1$ and $x_2y_2\in E_2$. Also, suppose that $x_1$ and $x_2$ are not pendant vertices. Then for the Haj\'{o}s sum 
$$G_3=G_1(x_1y_1)+_H G_2(x_2y_2),$$
we have:
$$ \gst (G_1) + \gst (G_2) -\deg(x_1)-\deg(x_2)+2 \leq  \gst(G_3) 
\leq  \gst (G_1) + \gst (G_2)+1.  $$	
	\end{theorem}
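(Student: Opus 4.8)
The plan is to establish the two inequalities separately, passing freely between a $\gst$-set of $G_3$ and $\gst$-sets of $G_1, G_2$ by local surgery near the identified vertex $v_H := v_H(x_1x_2)$ and the new edge $y_1y_2$. Throughout I write $n_i = |V_i|$, and I keep in mind the degree bookkeeping under the Haj\'os construction: in $G_3$ we have $\deg_{G_3}(v_H) = \deg_{G_1}(x_1) + \deg_{G_2}(x_2) - 2$, while $\deg_{G_3}(y_1) = \deg_{G_1}(y_1) + 1$, $\deg_{G_3}(y_2) = \deg_{G_2}(y_2) + 1$, and every other vertex keeps its original degree. The hypothesis that $x_1, x_2$ are not pendant guarantees $\deg_{G_3}(v_H) \ge \max(\deg_{G_1}(x_1), \deg_{G_2}(x_2))$, so $v_H$ strong-dominates in $G_3$ at least everything that $x_1$ strong-dominated in $G_1$ and everything that $x_2$ strong-dominated in $G_2$; this will be the workhorse for the upper bound.

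For the \emph{upper bound} $\gst(G_3) \le \gst(G_1) + \gst(G_2) + 1$, I would take $\gst$-sets $D_1$ of $G_1$ and $D_2$ of $G_2$ and build a strong dominating set of $G_3$ of size at most $|D_1| + |D_2| + 1$. The natural candidate is $D = (D_1 \setminus \{x_1\}) \cup (D_2 \setminus \{x_2\}) \cup \{v_H\} \cup \{y_1, y_2\}$-type constructions; one needs to handle a short case analysis according to whether $x_1 \in D_1$, $x_2 \in D_2$, and whether $y_1, y_2$ lie in the $D_i$'s. The points to verify are: (i) vertices of $V_1 \setminus \{x_1, y_1\}$ that were strong-dominated in $G_1$ by some $y \ne x_1$ are still strong-dominated (their degrees and their dominator's degree are unchanged, so this is immediate); (ii) vertices formerly strong-dominated by $x_1$ are now strong-dominated by $v_H$, using the degree inequality above; (iii) the vertex $y_1$, whose degree went \emph{up} by one, and symmetrically $y_2$ — this is the delicate spot, and it is exactly why the ``$+1$'' slack is needed: in the worst case one throws $y_1$ (or both $y_1, y_2$) into $D$. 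Counting carefully across the cases (the fused vertex $v_H$ replacing possibly two vertices $x_1, x_2$ of $D_1 \cup D_2$, offset by the at most one extra vertex we add) yields the bound $|D| \le |D_1| + |D_2| + 1$.

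For the \emph{lower bound} $\gst(G_1) + \gst(G_2) - \deg(x_1) - \deg(x_2) + 2 \le \gst(G_3)$, I would start from a $\gst$-set $D_3$ of $G_3$ and manufacture strong dominating sets of $G_1$ and of $G_2$ whose sizes are controlled by $|D_3|$. Set $D_3^{(1)} = (D_3 \cap V_1)$ with $v_H$ reinterpreted as $x_1$ when present; this set strong-dominates, in $G_1$, every vertex of $V_1$ except possibly $y_1$ (whose degree dropped by $1$ going from $G_3$ to $G_1$, which only helps it be dominated — so actually $y_1$ is fine too unless its $G_3$-dominator was $y_2 \notin V_1$) and except possibly vertices that in $G_3$ were strong-dominated by a neighbour of $v_H$ through an edge lost in forming $G_1$; the only repair needed is to re-cover those neighbours of $x_1$ that relied on the ``extra'' adjacencies of $v_H$. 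Adding all of $N_{G_1}(x_1)$ (at most $\deg_{G_1}(x_1)$ vertices) to $D_3^{(1)}$ certainly produces a strong dominating set of $G_1$, giving $\gst(G_1) \le |D_3 \cap V_1| + \deg(x_1) - [\text{a saving of } 1 \text{ because } x_1 \text{ or a neighbour is already counted}]$; symmetrically $\gst(G_2) \le |D_3 \cap V_2| + \deg(x_2) - 1$, where one must be careful that $v_H$ is not double-counted between the two sides (it belongs to $V_1 \cap V_2$ after identification — account for it exactly once, contributing the two $+1$'s). Adding the two inequalities and using $|D_3 \cap V_1| + |D_3 \cap V_2| \le |D_3| + 1 = \gst(G_3) + 1$ rearranges to the claimed bound.

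The main obstacle is the degree-sensitivity at $y_1$ and $y_2$: because their degrees change when we pass between $G_3$ and the $G_i$, a vertex that was comfortably strong-dominated in one graph may fail the ``$\deg(x) \le \deg(y)$'' test in the other, and the new edge $y_1y_2$ can be the unique link that strong-dominates $y_1$ or $y_2$ in $G_3$. Keeping the case analysis over ``is $y_i \in D_i$?'', ``was $y_i$ strong-dominated via the new edge?'' organized — and verifying that the single unit of slack on each side absorbs every bad case — is where the real work lies; the degree arithmetic everywhere else is routine.
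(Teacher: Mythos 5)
Your overall strategy --- local surgery at $v_H(x_1x_2)$ and at the new edge $y_1y_2$, passing between a $\gst$-set of $G_3$ and $\gst$-sets of $G_1,G_2$ with a case analysis --- is the same as the paper's, but two concrete problems keep the sketch from being a proof. First, your degree bookkeeping at $y_1,y_2$ is wrong: the Haj\'{o}s construction \emph{deletes} the edges $x_1y_1$ and $x_2y_2$ before adding $y_1y_2$, so $\deg_{G_3}(y_1)=\deg_{G_1}(y_1)$ and $\deg_{G_3}(y_2)=\deg_{G_2}(y_2)$; these degrees do not increase by one. You locate the ``delicate spot'' in a degree increase at $y_1,y_2$, whereas the actual difficulty is the loss of adjacency: $y_1$ is not adjacent to $v_H$ in $G_3$, so your claim that $v_H$ strong-dominates everything $x_1$ strong-dominated is false precisely for $y_1$. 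If $y_1$ was strong dominated only by $x_1$ in $G_1$, it must be rescued either by $y_2$ across the new edge or by placing $y_1$ itself in the set; since one of $y_1,y_2$ (the one of larger degree) always strong-dominates the other, a single unit of slack suffices --- that observation is the content of the paper's case (i), and it is missing from your sketch. Under your incorrect bookkeeping the worst case would seem to require adding both $y_1$ and $y_2$, and then the asserted count $|D|\le|D_1|+|D_2|+1$ is not established.

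Second, the lower-bound arithmetic as written does not reach the stated bound. You claim $\gst(G_i)\le |D_3\cap V_i|+\deg(x_i)-1$ via an unjustified ``saving of $1$'' (if $y_i\notin D_3$ no neighbour of $x_i$ is guaranteed to be already counted, and since $x_i\notin N(x_i)$ keeping $x_i$ while adding $N(x_i)$ saves nothing), together with $|D_3\cap V_1|+|D_3\cap V_2|\le |D_3|+1$. Summing these gives $\gst(G_1)+\gst(G_2)\le\gst(G_3)+\deg(x_1)+\deg(x_2)-1$, i.e.\ the lower bound with $+1$ where the theorem requires $+2$. To recover the missing unit one must, as the paper does, \emph{discard} the vertex $v_H$ when replacing it by $N(x_i)$ --- legitimate because in the bad case some neighbour of $x_i$ has degree exceeding $\deg(x_i)$ and therefore strong-dominates $x_i$ --- and treat the subcases $y_i\in S_3$ versus $y_i\notin S_3$ separately; the crude union bound you propose falls one short.
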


	\begin{proof}
First we find the upper bound. Since $x_1$ and $x_2$ are not pendant vertices, then by the definition of the Haj\'{o}s sum we know that $\deg (v_H(x_1x_2))=\deg (x_1)+\deg (x_2)-2$. Also, $\deg _{G_3}(y_1)=\deg _{G_1}(y_1)$, and $\deg _{G_3}(y_2)=\deg _{G_2}(y_2)$. Suppose that $D_i$ is a $\gst$-set of $G_i$, for $i=1,2$.  We have the following cases:
\begin{itemize}
\item[(i)]
$y_1$ is strong dominated by $x_1$, and $y_2$ is strong dominated by $x_2$. Without loss of generality, suppose that $\deg(y_1)\geq \deg(y_2)$. Let 
$$D_3=\left(D_1\setminus \{x_1\}\right)\cup \left(D_2\setminus \{x_2\}\right)\cup \{v_H(x_1x_2),y_1\}.$$
$D_3$ is a strong dominating set of $G_3$, because $y_2$ is strong dominated by $y_1$, and every other vertices in $\overline{D_3}$ is strong dominated by the same vertices as before or $v_H(x_1x_2)$. So we have 
$$\gst(G_3) \leq  \gst (G_1) + \gst (G_2).$$

\item[(ii)]
$y_1$ is strong dominated by $x_1$, and $y_2$ is not strong dominated by $x_2$. In this case, we may have $y_2\in D_2$ or $y_2\in \overline{D_2}$, and we may have $x_2\in D_2$ or $x_2\in \overline{D_2}$. Let 
$$D_3=\left(D_1\setminus \{x_1\}\right)\cup \left(D_2\setminus \{x_2\}\right)\cup \{v_H(x_1x_2),y_1\}.$$
$D_3$ is a strong dominating set of $G_3$, because if $y_2\in \overline{D_2}$, then it is strong dominated by the same vertex as before, and every other vertices in $\overline{D_3}$ is strong dominated by the same vertices as before or $v_H(x_1x_2)$. So, in the worst case, which is $x_2\in \overline{D_2}$, we have 
$$\gst(G_3) \leq  \gst (G_1) + \gst (G_2)+1.$$

\item[(iii)]
$y_1$ is not strong dominated by $x_1$, and $y_2$ is not strong dominated by $x_2$.
By a similar discussion as part (ii), 
$$D_3=\left(D_1\setminus \{x_1\}\right)\cup \left(D_2\setminus \{x_2\}\right)\cup \{v_H(x_1x_2)\},$$
is a strong dominating set of $G_3$, and in the worst case, we have 
$$\gst(G_3) \leq  \gst (G_1) + \gst (G_2)+1.$$

\item[(iv)]
$x_1$ is strong dominated by $y_1$, and $x_2$ is strong dominated by $y_2$. Then clearly 
$$D_3=D_1\cup D_2\cup \{v_H(x_1x_2)\},$$
is a strong dominating set of $G_3$, and we have 
$$\gst(G_3) \leq  \gst (G_1) + \gst (G_2)+1.$$

\item[(v)]
$x_1$ is strong dominated by $y_1$, and $x_2$ is not strong dominated by $y_2$. Then we may have $y_2$ is strong dominated by $x_2$, which we have the result by similar argument as case (ii). Otherwise, by a similar argument as part (ii), 
$$D_3=\left(D_1\cup D_2\setminus \{x_2\}\right)\cup \{v_H(x_1x_2)\}.$$
is a strong dominating set of $G_3$, and in the worst case, we have 
$$\gst(G_3) \leq  \gst (G_1) + \gst (G_2)+1.$$

\item[(vi)]
$x_1$ is not strong dominated by $y_1$, and $x_2$ is not strong dominated by $y_2$.  Then we may have
$y_1$ is strong dominated by $x_1$, and $y_2$ is strong dominated by $x_2$, which gives us the result by case (i), or we may have $y_1$ is strong dominated by $x_1$, and $y_2$ is not strong dominated by $x_2$, which gives us the result by case (ii). Otherwise,
by similar argument as before, 
$$D_3=\left(D_1\setminus \{x_1\}\right)\cup \left(D_2\setminus \{x_2\}\right)\cup \{v_H(x_1x_2)\},$$
is a strong dominating set of $G_3$, and in the worst case, we have 
$$\gst(G_3) \leq  \gst (G_1) + \gst (G_2)+1.$$
\end{itemize}
So, in general, we have $\gst(G_3) \leq  \gst (G_1) + \gst (G_2)+1$. Now, we find the lower bound. Suppose that $S_3$ is a $\gst$-set of $G_3$. We find strong dominating sets of $G_1$ and $G_2$ based on $S_3$.  We consider the following cases:
\begin{itemize}
\item[(i)]
$v_H(x_1x_2)\in S_3$. Here we consider the following sub-cases:
\begin{itemize}
\item[(a)]
$y_1\in S_3$ and $y_2\in S_3$. If $v_H(x_1x_2)$ is not strong dominating any vertices in $\overline{S_3}$, then
$$S_1=\left(S_3\setminus \Big( V(G_2)\cup \{v_H(x_1x_2)\} \Big)\right)\cup \{x_1\}$$ 
is a strong dominating set of $G_1$, and  
$$S_2=\left(S_3\setminus \Big( V(G_1)\cup \{v_H(x_1x_2)\} \Big)\right)\cup \{x_2\}$$
is a strong dominating set of $G_2$. But, if $v_H(x_1x_2)$ is  strong dominating some vertices in $\overline{S_3}$, then after forming $G_1$ and $G_2$ from $G_3$, then if 
$\deg(x_1)\geq \max \{\deg(u)~|~ u\in N(x_1)\}$,
and 
$\deg(x_2)\geq \max \{\deg(v)~|~ v\in N(x_2)\}$,
we consider $S_1$ and $S_2$ as mentioned. If 
$\deg(x_1)\geq \max \{\deg(u)~|~ u\in N(x_1)\}$,
But
$\deg(x_2)\ngeqslant \max \{\deg(v)~|~ v\in N(x_2)\}$,
we consider $S_1$  as mentioned, and let 
$$S_2=\left(S_3\setminus \Big( V(G_1)\cup \{v_H(x_1x_2)\} \Big)\right)\cup N(x_2),$$
then one can easily check that $S_2$ is a strong dominating set of $G_2$. If 
$\deg(x_1)\ngeqslant \max \{\deg(u)~|~ u\in N(x_1)\}$,
and
$\deg(x_2)\ngeqslant \max \{\deg(v)~|~ v\in N(x_2)\}$,
we consider 
$$S_1=\left(S_3\setminus \Big( V(G_2)\cup \{v_H(x_1x_2)\} \Big)\right)\cup N(x_1),$$
and
$$S_2=\left(S_3\setminus \Big( V(G_1)\cup \{v_H(x_1x_2)\} \Big)\right)\cup N(x_2).$$
Here $S_1$ and $S_2$ are strong dominating sets of $G_1$ and $G_2$, respectively. So in the worst case we have 
$$\gst(G_1) + \gst (G_2) \leq \gst (G_3)-1 +\deg(x_1)-1+\deg(x_2)-1.$$
\item[(b)]
$y_1\in S_3$ and $y_2\notin S_3$.  If $v_H(x_1x_2)$ is not strong dominating any vertices in $\overline{S_3}$, then one can easily check that
$$S_1=\left(S_3\setminus \Big( V(G_2)\cup \{v_H(x_1x_2)\} \Big)\right)\cup \{x_1\}$$ 
is a strong dominating set of $G_1$, and  one of the 
$$S_2=\left(S_3\setminus \Big( V(G_1)\cup \{v_H(x_1x_2)\} \Big)\right)\cup \{x_2\},$$
or
$$S_2'=\left(S_3\setminus \Big( V(G_1)\cup \{v_H(x_1x_2)\} \Big)\right)\cup \{y_2\}$$
 is a strong dominating set of $G_2$ (or possibly both are strong dominating sets of $G_2$). Otherwise, by similar argument as part (a), we conclude that 
$$\gst(G_1) + \gst (G_2) \leq \gst (G_3)-1 +\deg(x_1)-1+\deg(x_2).$$
\item[(c)]
$y_1\notin S_3$ and $y_2\notin S_3$. Then there exists $y_1'\in V(G_1)$ such that $y_1$ is strong dominated by that, and there exists $y_2'\in V(G_2)$ and is strong dominating $y_2$. Then one can easily check that 
$$S_1=\left(S_3\setminus \Big( V(G_2)\cup \{v_H(x_1x_2)\} \Big)\right)\cup \{x_1\}$$ 
is a strong dominating set of $G_1$, and  
$$S_2=\left(S_3\setminus \Big( V(G_1)\cup \{v_H(x_1x_2)\} \Big)\right)\cup \{x_2\}$$
is a strong dominating set of $G_2$, and we have 
$$\gst(G_1) + \gst (G_2) \leq \gst (G_3)+1.$$
\end{itemize}

\item[(ii)]
$v_H(x_1x_2)\notin S_3$. Without loss of generality, suppose that there exists $x_1'\in V(G_1)$ such that $v_H(x_1x_2)$ is strong dominated by $x_1'$. We consider the following cases:
\begin{itemize}
\item[(a)]
$y_1\in S_3$ and $y_2\in S_3$. Then one can easily check that 
$$S_1=S_3\setminus  V(G_2) $$ 
is a strong dominating set of $G_1$, and  
$$S_2=\left(S_3\setminus  V(G_1)\right)\cup \{x_2\}$$
is a strong dominating set of $G_2$. So
$$\gst(G_1) + \gst (G_2) \leq \gst (G_3)+1.$$
\item[(b)]
$y_1\in S_3$ and $y_2\notin S_3$. Then 
$$S_1=S_3\setminus  V(G_2) $$ 
is a strong dominating set of $G_1$, and  
$$S_2=\left(S_3\setminus  V(G_1)\right)\cup \{x_2\}$$
or 
$$S_2'=\left(S_3\setminus  V(G_1)\right)\cup \{y_2\}$$
is a strong dominating set of $G_2$ (or possibly both are strong dominating set of $G_2$). So
$$\gst(G_1) + \gst (G_2) \leq \gst (G_3)+1.$$
\item[(c)]
$y_1\notin S_3$ and $y_2\notin S_3$. Then by considering similar sets as part (a), we have 
$$\gst(G_1) + \gst (G_2) \leq \gst (G_3)+1.$$
\end{itemize}
\end{itemize}
Therefore we have $\gst(G_3)\geq \gst (G_1) + \gst (G_2) -\deg(x_1)-\deg(x_2)+2$, and we are done.
	\end{proof}

\begin{figure}[!h]
	\begin{center}
		\psscalebox{0.6 0.6}
		{
			\begin{pspicture}(0,-12.499306)(13.602778,12.102083)
			\psline[linecolor=black, linewidth=0.08](11.001389,10.700695)(12.201389,11.900695)(12.201389,11.900695)
			\psline[linecolor=black, linewidth=0.08](11.001389,10.700695)(12.201389,11.100695)(12.201389,11.100695)
			\psline[linecolor=black, linewidth=0.08](11.001389,10.700695)(12.201389,10.300694)(12.201389,10.300694)
			\psline[linecolor=black, linewidth=0.08](11.001389,10.700695)(12.201389,9.500694)(12.201389,9.500694)
			\psline[linecolor=black, linewidth=0.08](11.001389,7.5006948)(12.201389,8.700695)(12.201389,8.700695)
			\psline[linecolor=black, linewidth=0.08](11.001389,7.5006948)(12.201389,7.9006944)(12.201389,7.9006944)
			\psline[linecolor=black, linewidth=0.08](11.001389,7.5006948)(12.201389,7.1006947)(12.201389,7.1006947)
			\psline[linecolor=black, linewidth=0.08](11.001389,7.5006948)(12.201389,6.3006945)(12.201389,6.3006945)
			\psline[linecolor=black, linewidth=0.08](11.001389,4.3006945)(12.201389,5.5006948)(12.201389,5.5006948)
			\psline[linecolor=black, linewidth=0.08](11.001389,4.3006945)(12.201389,4.7006946)(12.201389,4.7006946)
			\psline[linecolor=black, linewidth=0.08](11.001389,4.3006945)(12.201389,3.9006946)(12.201389,3.9006946)
			\psline[linecolor=black, linewidth=0.08](11.001389,4.3006945)(12.201389,3.1006947)(12.201389,3.1006947)
			\psline[linecolor=black, linewidth=0.08](11.001389,10.700695)(9.401389,7.5006948)(9.401389,7.5006948)
			\psline[linecolor=black, linewidth=0.08](9.401389,7.5006948)(11.001389,7.5006948)(11.001389,7.5006948)
			\psline[linecolor=black, linewidth=0.08](9.401389,7.5006948)(11.001389,4.3006945)(11.001389,4.3006945)
			\psline[linecolor=black, linewidth=0.08](9.401389,7.5006948)(9.401389,4.3006945)(9.401389,4.3006945)
			\psline[linecolor=black, linewidth=0.08](9.401389,4.3006945)(8.201389,3.5006945)(8.201389,3.5006945)
			\psline[linecolor=black, linewidth=0.08](9.401389,4.3006945)(9.001389,3.5006945)(9.001389,3.5006945)
			\psline[linecolor=black, linewidth=0.08](9.401389,4.3006945)(9.801389,3.5006945)(9.801389,3.5006945)
			\psline[linecolor=black, linewidth=0.08](9.401389,4.3006945)(10.601389,3.5006945)(10.601389,3.5006945)
			\psline[linecolor=black, linewidth=0.08](6.201389,3.5006945)(4.601389,4.3006945)(4.601389,4.3006945)
			\psline[linecolor=black, linewidth=0.08](4.601389,4.3006945)(5.4013886,3.5006945)(5.4013886,3.5006945)
			\psline[linecolor=black, linewidth=0.08](4.601389,4.3006945)(4.601389,3.5006945)(4.601389,3.5006945)
			\psline[linecolor=black, linewidth=0.08](4.601389,4.3006945)(3.8013887,3.5006945)(3.8013887,3.5006945)
			\psline[linecolor=black, linewidth=0.08](4.601389,4.3006945)(3.0013888,3.5006945)(3.0013888,3.5006945)
			\psline[linecolor=black, linewidth=0.08](4.601389,7.5006948)(4.601389,4.3006945)(4.601389,4.3006945)
			\psline[linecolor=black, linewidth=0.08](4.601389,7.5006948)(2.601389,6.3006945)(2.601389,6.3006945)
			\psline[linecolor=black, linewidth=0.08](4.601389,7.5006948)(2.601389,8.700695)(2.601389,8.700695)
			\psline[linecolor=black, linewidth=0.08](2.601389,8.700695)(1.4013889,7.9006944)(1.4013889,7.9006944)
			\psline[linecolor=black, linewidth=0.08](2.601389,8.700695)(1.4013889,8.700695)(1.4013889,8.700695)
			\psline[linecolor=black, linewidth=0.08](2.601389,8.700695)(1.4013889,9.500694)(1.4013889,9.500694)
			\psline[linecolor=black, linewidth=0.08](2.601389,6.3006945)(1.4013889,6.3006945)(1.4013889,6.3006945)
			\psline[linecolor=black, linewidth=0.08](1.4013889,7.1006947)(2.601389,6.3006945)(2.601389,6.3006945)
			\psline[linecolor=black, linewidth=0.08](1.4013889,5.5006948)(2.601389,6.3006945)(2.601389,6.3006945)
			\psline[linecolor=black, linewidth=0.08](13.401389,11.100695)(12.201389,11.100695)(12.201389,11.100695)
			\psline[linecolor=black, linewidth=0.08](13.401389,10.300694)(12.201389,10.300694)(12.201389,10.300694)
			\psline[linecolor=black, linewidth=0.08](13.401389,9.500694)(12.201389,9.500694)(12.201389,9.500694)
			\psline[linecolor=black, linewidth=0.08](12.201389,8.700695)(13.401389,8.700695)(13.401389,8.700695)
			\psline[linecolor=black, linewidth=0.08](12.201389,7.9006944)(13.401389,7.9006944)(13.401389,7.9006944)
			\psline[linecolor=black, linewidth=0.08](12.201389,7.1006947)(13.401389,7.1006947)(13.401389,7.1006947)
			\psline[linecolor=black, linewidth=0.08](12.201389,6.3006945)(13.401389,6.3006945)(13.401389,6.3006945)
			\psline[linecolor=black, linewidth=0.08](12.201389,5.5006948)(13.401389,5.5006948)(13.401389,5.5006948)
			\psline[linecolor=black, linewidth=0.08](12.201389,4.7006946)(13.401389,4.7006946)(13.401389,4.7006946)
			\psline[linecolor=black, linewidth=0.08](12.201389,3.9006946)(13.401389,3.9006946)(13.401389,3.9006946)
			\psline[linecolor=black, linewidth=0.08](12.201389,3.1006947)(13.401389,3.1006947)(13.401389,3.1006947)
			\psline[linecolor=black, linewidth=0.08](1.4013889,9.500694)(0.20138885,9.500694)(0.20138885,9.500694)
			\psline[linecolor=black, linewidth=0.08](0.20138885,8.700695)(1.8013889,8.700695)
			\psline[linecolor=black, linewidth=0.08](0.20138885,7.9006944)(1.4013889,7.9006944)
			\psline[linecolor=black, linewidth=0.08](0.20138885,7.1006947)(1.4013889,7.1006947)
			\psline[linecolor=black, linewidth=0.08](0.20138885,6.3006945)(1.4013889,6.3006945)
			\psline[linecolor=black, linewidth=0.08](0.20138885,5.5006948)(1.4013889,5.5006948)(1.4013889,5.5006948)
			\psline[linecolor=black, linewidth=0.08](3.0013888,3.5006945)(3.0013888,2.3006945)(3.0013888,2.3006945)
			\psline[linecolor=black, linewidth=0.08](3.8013887,3.5006945)(3.8013887,2.3006945)(3.8013887,2.3006945)
			\psline[linecolor=black, linewidth=0.08](4.601389,3.5006945)(4.601389,2.3006945)
			\psline[linecolor=black, linewidth=0.08](5.4013886,3.5006945)(5.4013886,2.3006945)
			\psline[linecolor=black, linewidth=0.08](6.201389,3.5006945)(6.201389,2.3006945)
			\psline[linecolor=black, linewidth=0.08](8.201389,3.5006945)(8.201389,2.3006945)(8.201389,2.3006945)
			\psline[linecolor=black, linewidth=0.08](9.001389,3.5006945)(9.001389,2.3006945)(9.001389,2.3006945)
			\psline[linecolor=black, linewidth=0.08](9.801389,3.5006945)(9.801389,2.3006945)(9.801389,2.3006945)
			\psline[linecolor=black, linewidth=0.08](10.601389,3.5006945)(10.601389,2.3006945)(10.601389,2.3006945)
			\psdots[linecolor=black, dotstyle=o, dotsize=0.4, fillcolor=white](13.401389,11.100695)
			\psdots[linecolor=black, dotstyle=o, dotsize=0.4, fillcolor=white](13.401389,10.300694)
			\psdots[linecolor=black, dotstyle=o, dotsize=0.4, fillcolor=white](13.401389,9.500694)
			\psdots[linecolor=black, dotstyle=o, dotsize=0.4, fillcolor=white](13.401389,8.700695)
			\psdots[linecolor=black, dotstyle=o, dotsize=0.4, fillcolor=white](13.401389,7.9006944)
			\psdots[linecolor=black, dotstyle=o, dotsize=0.4, fillcolor=white](13.401389,7.1006947)
			\psdots[linecolor=black, dotstyle=o, dotsize=0.4, fillcolor=white](13.401389,6.3006945)
			\psdots[linecolor=black, dotstyle=o, dotsize=0.4, fillcolor=white](13.401389,5.5006948)
			\psdots[linecolor=black, dotstyle=o, dotsize=0.4, fillcolor=white](13.401389,4.7006946)
			\psdots[linecolor=black, dotstyle=o, dotsize=0.4, fillcolor=white](13.401389,3.9006946)
			\psdots[linecolor=black, dotstyle=o, dotsize=0.4, fillcolor=white](13.401389,3.1006947)
			\psdots[linecolor=black, dotstyle=o, dotsize=0.4, fillcolor=white](10.601389,2.3006945)
			\psdots[linecolor=black, dotstyle=o, dotsize=0.4, fillcolor=white](9.801389,2.3006945)
			\psdots[linecolor=black, dotstyle=o, dotsize=0.4, fillcolor=white](9.001389,2.3006945)
			\psdots[linecolor=black, dotstyle=o, dotsize=0.4, fillcolor=white](8.201389,2.3006945)
			\psdots[linecolor=black, dotstyle=o, dotsize=0.4, fillcolor=white](6.201389,2.3006945)
			\psdots[linecolor=black, dotstyle=o, dotsize=0.4, fillcolor=white](5.4013886,2.3006945)
			\psdots[linecolor=black, dotstyle=o, dotsize=0.4, fillcolor=white](4.601389,2.3006945)
			\psdots[linecolor=black, dotstyle=o, dotsize=0.4, fillcolor=white](3.8013887,2.3006945)
			\psdots[linecolor=black, dotstyle=o, dotsize=0.4, fillcolor=white](3.0013888,2.3006945)
			\psdots[linecolor=black, dotstyle=o, dotsize=0.4, fillcolor=white](0.20138885,5.5006948)
			\psdots[linecolor=black, dotstyle=o, dotsize=0.4, fillcolor=white](0.20138885,6.3006945)
			\psdots[linecolor=black, dotstyle=o, dotsize=0.4, fillcolor=white](0.20138885,7.1006947)
			\psdots[linecolor=black, dotstyle=o, dotsize=0.4, fillcolor=white](0.20138885,7.9006944)
			\psdots[linecolor=black, dotstyle=o, dotsize=0.4, fillcolor=white](0.20138885,8.700695)
			\psdots[linecolor=black, dotstyle=o, dotsize=0.4, fillcolor=white](0.20138885,9.500694)
			\psdots[linecolor=black, dotsize=0.4](12.201389,11.900695)
			\psdots[linecolor=black, dotsize=0.4](12.201389,11.100695)
			\psdots[linecolor=black, dotsize=0.4](12.201389,10.300694)
			\psdots[linecolor=black, dotsize=0.4](12.201389,9.500694)
			\psdots[linecolor=black, dotsize=0.4](12.201389,8.700695)
			\psdots[linecolor=black, dotsize=0.4](12.201389,7.9006944)
			\psdots[linecolor=black, dotsize=0.4](12.201389,7.1006947)
			\psdots[linecolor=black, dotsize=0.4](12.201389,6.3006945)
			\psdots[linecolor=black, dotsize=0.4](12.201389,5.5006948)
			\psdots[linecolor=black, dotsize=0.4](12.201389,4.7006946)
			\psdots[linecolor=black, dotsize=0.4](12.201389,3.9006946)
			\psdots[linecolor=black, dotsize=0.4](12.201389,3.1006947)
			\psdots[linecolor=black, dotsize=0.4](10.601389,3.5006945)
			\psdots[linecolor=black, dotsize=0.4](9.801389,3.5006945)
			\psdots[linecolor=black, dotsize=0.4](9.001389,3.5006945)
			\psdots[linecolor=black, dotsize=0.4](8.201389,3.5006945)
			\psdots[linecolor=black, dotsize=0.4](6.201389,3.5006945)
			\psdots[linecolor=black, dotsize=0.4](5.4013886,3.5006945)
			\psdots[linecolor=black, dotsize=0.4](4.601389,3.5006945)
			\psdots[linecolor=black, dotsize=0.4](3.8013887,3.5006945)
			\psdots[linecolor=black, dotsize=0.4](3.0013888,3.5006945)
			\psdots[linecolor=black, dotsize=0.4](1.4013889,5.5006948)
			\psdots[linecolor=black, dotsize=0.4](1.4013889,6.3006945)
			\psdots[linecolor=black, dotsize=0.4](1.4013889,7.1006947)
			\psdots[linecolor=black, dotsize=0.4](1.4013889,7.9006944)
			\psdots[linecolor=black, dotsize=0.4](1.4013889,8.700695)
			\psdots[linecolor=black, dotsize=0.4](1.4013889,9.500694)
			\psline[linecolor=black, linewidth=0.08](9.401389,-2.4993055)(10.601389,-1.2993054)(10.601389,-1.2993054)
			\psline[linecolor=black, linewidth=0.08](9.401389,-2.4993055)(10.601389,-2.0993054)(10.601389,-2.0993054)
			\psline[linecolor=black, linewidth=0.08](9.401389,-2.4993055)(10.601389,-2.8993053)(10.601389,-2.8993053)
			\psline[linecolor=black, linewidth=0.08](9.401389,-2.4993055)(10.601389,-3.6993055)(10.601389,-3.6993055)
			\psline[linecolor=black, linewidth=0.08](9.401389,-5.6993055)(10.601389,-4.4993052)(10.601389,-4.4993052)
			\psline[linecolor=black, linewidth=0.08](9.401389,-5.6993055)(10.601389,-5.2993054)(10.601389,-5.2993054)
			\psline[linecolor=black, linewidth=0.08](9.401389,-5.6993055)(10.601389,-6.0993056)(10.601389,-6.0993056)
			\psline[linecolor=black, linewidth=0.08](9.401389,-5.6993055)(10.601389,-6.8993053)(10.601389,-6.8993053)
			\psline[linecolor=black, linewidth=0.08](9.401389,-8.899305)(10.601389,-7.6993055)(10.601389,-7.6993055)
			\psline[linecolor=black, linewidth=0.08](9.401389,-8.899305)(10.601389,-8.499306)(10.601389,-8.499306)
			\psline[linecolor=black, linewidth=0.08](9.401389,-8.899305)(10.601389,-9.299305)(10.601389,-9.299305)
			\psline[linecolor=black, linewidth=0.08](9.401389,-8.899305)(10.601389,-10.099305)(10.601389,-10.099305)
			\psline[linecolor=black, linewidth=0.08](9.401389,-2.4993055)(7.8013887,-5.6993055)(7.8013887,-5.6993055)
			\psline[linecolor=black, linewidth=0.08](7.8013887,-5.6993055)(9.401389,-5.6993055)(9.401389,-5.6993055)
			\psline[linecolor=black, linewidth=0.08](7.8013887,-5.6993055)(9.401389,-8.899305)(9.401389,-8.899305)
			\psline[linecolor=black, linewidth=0.08](7.8013887,-8.899305)(6.601389,-9.699306)(6.601389,-9.699306)
			\psline[linecolor=black, linewidth=0.08](7.8013887,-8.899305)(7.4013886,-9.699306)(7.4013886,-9.699306)
			\psline[linecolor=black, linewidth=0.08](7.8013887,-8.899305)(8.201389,-9.699306)(8.201389,-9.699306)
			\psline[linecolor=black, linewidth=0.08](7.8013887,-8.899305)(9.001389,-9.699306)(9.001389,-9.699306)
			\psline[linecolor=black, linewidth=0.08](5.001389,-9.699306)(3.401389,-8.899305)(3.401389,-8.899305)
			\psline[linecolor=black, linewidth=0.08](3.401389,-8.899305)(4.201389,-9.699306)(4.201389,-9.699306)
			\psline[linecolor=black, linewidth=0.08](3.401389,-8.899305)(3.401389,-9.699306)(3.401389,-9.699306)
			\psline[linecolor=black, linewidth=0.08](3.401389,-8.899305)(2.601389,-9.699306)(2.601389,-9.699306)
			\psline[linecolor=black, linewidth=0.08](3.401389,-8.899305)(1.8013889,-9.699306)(1.8013889,-9.699306)
			\psline[linecolor=black, linewidth=0.08](7.8013887,-5.6993055)(5.8013887,-6.8993053)(5.8013887,-6.8993053)
			\psline[linecolor=black, linewidth=0.08](7.8013887,-5.6993055)(5.8013887,-4.4993052)(5.8013887,-4.4993052)
			\psline[linecolor=black, linewidth=0.08](5.8013887,-4.4993052)(4.601389,-5.2993054)(4.601389,-5.2993054)
			\psline[linecolor=black, linewidth=0.08](5.8013887,-4.4993052)(4.601389,-4.4993052)(4.601389,-4.4993052)
			\psline[linecolor=black, linewidth=0.08](5.8013887,-4.4993052)(4.601389,-3.6993055)(4.601389,-3.6993055)
			\psline[linecolor=black, linewidth=0.08](5.8013887,-6.8993053)(4.601389,-6.8993053)(4.601389,-6.8993053)
			\psline[linecolor=black, linewidth=0.08](4.601389,-6.0993056)(5.8013887,-6.8993053)(5.8013887,-6.8993053)
			\psline[linecolor=black, linewidth=0.08](4.601389,-7.6993055)(5.8013887,-6.8993053)(5.8013887,-6.8993053)
			\psline[linecolor=black, linewidth=0.08](11.801389,-1.2993054)(10.601389,-1.2993054)(10.601389,-1.2993054)
			\psline[linecolor=black, linewidth=0.08](11.801389,-2.0993054)(10.601389,-2.0993054)(10.601389,-2.0993054)
			\psline[linecolor=black, linewidth=0.08](11.801389,-2.8993053)(10.601389,-2.8993053)(10.601389,-2.8993053)
			\psline[linecolor=black, linewidth=0.08](11.801389,-3.6993055)(10.601389,-3.6993055)(10.601389,-3.6993055)
			\psline[linecolor=black, linewidth=0.08](10.601389,-4.4993052)(11.801389,-4.4993052)(11.801389,-4.4993052)
			\psline[linecolor=black, linewidth=0.08](10.601389,-5.2993054)(11.801389,-5.2993054)(11.801389,-5.2993054)
			\psline[linecolor=black, linewidth=0.08](10.601389,-6.0993056)(11.801389,-6.0993056)(11.801389,-6.0993056)
			\psline[linecolor=black, linewidth=0.08](10.601389,-6.8993053)(11.801389,-6.8993053)(11.801389,-6.8993053)
			\psline[linecolor=black, linewidth=0.08](10.601389,-7.6993055)(11.801389,-7.6993055)(11.801389,-7.6993055)
			\psline[linecolor=black, linewidth=0.08](10.601389,-8.499306)(11.801389,-8.499306)(11.801389,-8.499306)
			\psline[linecolor=black, linewidth=0.08](10.601389,-9.299305)(11.801389,-9.299305)(11.801389,-9.299305)
			\psline[linecolor=black, linewidth=0.08](10.601389,-10.099305)(11.801389,-10.099305)(11.801389,-10.099305)
			\psline[linecolor=black, linewidth=0.08](4.601389,-3.6993055)(3.401389,-3.6993055)(3.401389,-3.6993055)
			\psline[linecolor=black, linewidth=0.08](3.401389,-4.4993052)(5.001389,-4.4993052)
			\psline[linecolor=black, linewidth=0.08](3.401389,-5.2993054)(4.601389,-5.2993054)
			\psline[linecolor=black, linewidth=0.08](3.401389,-6.0993056)(4.601389,-6.0993056)
			\psline[linecolor=black, linewidth=0.08](3.401389,-6.8993053)(4.601389,-6.8993053)
			\psline[linecolor=black, linewidth=0.08](3.401389,-7.6993055)(4.601389,-7.6993055)(4.601389,-7.6993055)
			\psline[linecolor=black, linewidth=0.08](1.8013889,-9.699306)(1.8013889,-10.899305)(1.8013889,-10.899305)
			\psline[linecolor=black, linewidth=0.08](2.601389,-9.699306)(2.601389,-10.899305)(2.601389,-10.899305)
			\psline[linecolor=black, linewidth=0.08](3.401389,-9.699306)(3.401389,-10.899305)
			\psline[linecolor=black, linewidth=0.08](4.201389,-9.699306)(4.201389,-10.899305)
			\psline[linecolor=black, linewidth=0.08](5.001389,-9.699306)(5.001389,-10.899305)
			\psline[linecolor=black, linewidth=0.08](6.601389,-9.699306)(6.601389,-10.899305)(6.601389,-10.899305)
			\psline[linecolor=black, linewidth=0.08](7.4013886,-9.699306)(7.4013886,-10.899305)(7.4013886,-10.899305)
			\psline[linecolor=black, linewidth=0.08](8.201389,-9.699306)(8.201389,-10.899305)(8.201389,-10.899305)
			\psline[linecolor=black, linewidth=0.08](9.001389,-9.699306)(9.001389,-10.899305)(9.001389,-10.899305)
			\psdots[linecolor=black, dotstyle=o, dotsize=0.4, fillcolor=white](11.801389,-1.2993054)
			\psdots[linecolor=black, dotstyle=o, dotsize=0.4, fillcolor=white](11.801389,-2.0993054)
			\psdots[linecolor=black, dotstyle=o, dotsize=0.4, fillcolor=white](11.801389,-2.8993053)
			\psdots[linecolor=black, dotstyle=o, dotsize=0.4, fillcolor=white](11.801389,-3.6993055)
			\psdots[linecolor=black, dotstyle=o, dotsize=0.4, fillcolor=white](11.801389,-4.4993052)
			\psdots[linecolor=black, dotstyle=o, dotsize=0.4, fillcolor=white](11.801389,-5.2993054)
			\psdots[linecolor=black, dotstyle=o, dotsize=0.4, fillcolor=white](11.801389,-6.0993056)
			\psdots[linecolor=black, dotstyle=o, dotsize=0.4, fillcolor=white](11.801389,-6.8993053)
			\psdots[linecolor=black, dotstyle=o, dotsize=0.4, fillcolor=white](11.801389,-7.6993055)
			\psdots[linecolor=black, dotstyle=o, dotsize=0.4, fillcolor=white](11.801389,-8.499306)
			\psdots[linecolor=black, dotstyle=o, dotsize=0.4, fillcolor=white](11.801389,-9.299305)
			\psdots[linecolor=black, dotstyle=o, dotsize=0.4, fillcolor=white](11.801389,-10.099305)
			\psdots[linecolor=black, dotstyle=o, dotsize=0.4, fillcolor=white](9.001389,-10.899305)
			\psdots[linecolor=black, dotstyle=o, dotsize=0.4, fillcolor=white](8.201389,-10.899305)
			\psdots[linecolor=black, dotstyle=o, dotsize=0.4, fillcolor=white](7.4013886,-10.899305)
			\psdots[linecolor=black, dotstyle=o, dotsize=0.4, fillcolor=white](6.601389,-10.899305)
			\psdots[linecolor=black, dotstyle=o, dotsize=0.4, fillcolor=white](5.001389,-10.899305)
			\psdots[linecolor=black, dotstyle=o, dotsize=0.4, fillcolor=white](4.201389,-10.899305)
			\psdots[linecolor=black, dotstyle=o, dotsize=0.4, fillcolor=white](3.401389,-10.899305)
			\psdots[linecolor=black, dotstyle=o, dotsize=0.4, fillcolor=white](2.601389,-10.899305)
			\psdots[linecolor=black, dotstyle=o, dotsize=0.4, fillcolor=white](1.8013889,-10.899305)
			\psdots[linecolor=black, dotstyle=o, dotsize=0.4, fillcolor=white](3.401389,-7.6993055)
			\psdots[linecolor=black, dotstyle=o, dotsize=0.4, fillcolor=white](3.401389,-6.8993053)
			\psdots[linecolor=black, dotstyle=o, dotsize=0.4, fillcolor=white](3.401389,-6.0993056)
			\psdots[linecolor=black, dotstyle=o, dotsize=0.4, fillcolor=white](3.401389,-5.2993054)
			\psdots[linecolor=black, dotstyle=o, dotsize=0.4, fillcolor=white](3.401389,-4.4993052)
			\psdots[linecolor=black, dotstyle=o, dotsize=0.4, fillcolor=white](3.401389,-3.6993055)
			\psdots[linecolor=black, dotsize=0.4](10.601389,-1.2993054)
			\psdots[linecolor=black, dotsize=0.4](10.601389,-2.0993054)
			\psdots[linecolor=black, dotsize=0.4](10.601389,-2.8993053)
			\psdots[linecolor=black, dotsize=0.4](10.601389,-3.6993055)
			\psdots[linecolor=black, dotsize=0.4](10.601389,-4.4993052)
			\psdots[linecolor=black, dotsize=0.4](10.601389,-5.2993054)
			\psdots[linecolor=black, dotsize=0.4](10.601389,-6.0993056)
			\psdots[linecolor=black, dotsize=0.4](10.601389,-6.8993053)
			\psdots[linecolor=black, dotsize=0.4](10.601389,-7.6993055)
			\psdots[linecolor=black, dotsize=0.4](10.601389,-8.499306)
			\psdots[linecolor=black, dotsize=0.4](10.601389,-9.299305)
			\psdots[linecolor=black, dotsize=0.4](10.601389,-10.099305)
			\psdots[linecolor=black, dotsize=0.4](9.001389,-9.699306)
			\psdots[linecolor=black, dotsize=0.4](8.201389,-9.699306)
			\psdots[linecolor=black, dotsize=0.4](7.4013886,-9.699306)
			\psdots[linecolor=black, dotsize=0.4](6.601389,-9.699306)
			\psdots[linecolor=black, dotsize=0.4](5.001389,-9.699306)
			\psdots[linecolor=black, dotsize=0.4](4.201389,-9.699306)
			\psdots[linecolor=black, dotsize=0.4](3.401389,-9.699306)
			\psdots[linecolor=black, dotsize=0.4](2.601389,-9.699306)
			\psdots[linecolor=black, dotsize=0.4](1.8013889,-9.699306)
			\psdots[linecolor=black, dotsize=0.4](4.601389,-7.6993055)
			\psdots[linecolor=black, dotsize=0.4](4.601389,-6.8993053)
			\psdots[linecolor=black, dotsize=0.4](4.601389,-6.0993056)
			\psdots[linecolor=black, dotsize=0.4](4.601389,-5.2993054)
			\psdots[linecolor=black, dotsize=0.4](4.601389,-4.4993052)
			\psdots[linecolor=black, dotsize=0.4](4.601389,-3.6993055)
			\psline[linecolor=black, linewidth=0.08](3.401389,-8.899305)(7.8013887,-8.899305)(7.8013887,-8.899305)
			\psdots[linecolor=black, dotsize=0.4](2.601389,8.700695)
			\psdots[linecolor=black, dotsize=0.4](2.601389,6.3006945)
			\psdots[linecolor=black, dotsize=0.4](4.601389,4.3006945)
			\psdots[linecolor=black, dotsize=0.4](9.401389,4.3006945)
			\psdots[linecolor=black, dotsize=0.4](11.001389,4.3006945)
			\psdots[linecolor=black, dotsize=0.4](11.001389,7.5006948)
			\psdots[linecolor=black, dotsize=0.4](11.001389,10.700695)
			\psdots[linecolor=black, dotsize=0.4](7.8013887,-5.6993055)
			\psdots[linecolor=black, dotsize=0.4](3.401389,-8.899305)
			\psdots[linecolor=black, dotstyle=o, dotsize=0.4, fillcolor=white](9.401389,-2.4993055)
			\psdots[linecolor=black, dotstyle=o, dotsize=0.4, fillcolor=white](9.401389,-5.6993055)
			\psdots[linecolor=black, dotstyle=o, dotsize=0.4, fillcolor=white](9.401389,-8.899305)
			\psdots[linecolor=black, dotstyle=o, dotsize=0.4, fillcolor=white](5.8013887,-4.4993052)
			\psdots[linecolor=black, dotstyle=o, dotsize=0.4, fillcolor=white](5.8013887,-6.8993053)
			\psdots[linecolor=black, dotstyle=o, dotsize=0.4, fillcolor=white](7.8013887,-8.899305)
			\psdots[linecolor=black, dotstyle=o, dotsize=0.4, fillcolor=white](9.401389,7.5006948)
			\psdots[linecolor=black, dotstyle=o, dotsize=0.4, fillcolor=white](4.601389,7.5006948)
			\rput[bl](4.201389,-12.499306){\LARGE{$G_1(x_1y_1)+_H G_2(x_2y_2)$}}
			\rput[bl](10.601389,0.70069456){\LARGE{$G_2$}}
			\rput[bl](2.601389,0.70069456){\LARGE{$G_1$}}
			\psrotate(5.981389, -5.8793054){0.25591654}{\rput[bl](5.981389,-5.8793054){$v_H(x_1x_2)$}}
			\rput[bl](4.9213886,7.3606944){$x_1$}
			\rput[bl](8.601389,7.3806944){$x_2$}
			\rput[bl](4.8813887,4.460695){$y_1$}
			\rput[bl](2.881389,-8.619306){$y_1$}
			\rput[bl](8.721389,4.4006944){$y_2$}
			\rput[bl](7.641389,-8.539306){$y_2$}
			\psline[linecolor=black, linewidth=0.08](12.201389,11.900695)(13.401389,11.900695)(13.401389,11.900695)
			\psdots[linecolor=black, dotstyle=o, dotsize=0.4, fillcolor=white](13.401389,11.900695)
			\end{pspicture}
		}
	\end{center}
	\caption{Haj\'{o}s construction of $G_1$ and $G_2$.} \label{fig:hajos-low}
\end{figure}

\begin{figure}[!h]
	\begin{center}
		\psscalebox{0.6 0.6}
		{
			\begin{pspicture}(0,-7.7393055)(18.402779,-2.2579165)
			\rput[bl](12.101389,-7.7393055){\LARGE{$G_1(x_1y_1)+_H G_2(x_2y_2)$}}
			\rput[bl](5.8013887,-7.6593056){\LARGE{$G_2$}}
			\rput[bl](1.0013889,-7.6593056){\LARGE{$G_1$}}
			\psrotate(13.901389, -3.5993054){0.25591654}{\rput[bl](13.901389,-3.5993054){$v_H(x_1x_2)$}}
			\rput[bl](3.061389,-3.7993054){$x_1$}
			\rput[bl](4.161389,-3.8193054){$x_2$}
			\rput[bl](2.881389,-6.9393053){$y_1$}
			\rput[bl](4.2813888,-6.9393053){$y_2$}
			\rput[bl](13.321389,-6.7193055){$y_1$}
			\rput[bl](15.5413885,-6.7193055){$y_2$}
			\psline[linecolor=black, linewidth=0.08](4.601389,-4.059305)(4.601389,-6.4593053)(4.601389,-6.4593053)
			\psline[linecolor=black, linewidth=0.08](4.601389,-4.059305)(6.201389,-4.059305)(6.201389,-4.059305)
			\psline[linecolor=black, linewidth=0.08](4.601389,-4.059305)(6.201389,-2.4593055)(6.201389,-2.4593055)
			\psline[linecolor=black, linewidth=0.08](4.601389,-4.059305)(6.201389,-5.6593056)(6.201389,-5.6593056)
			\psline[linecolor=black, linewidth=0.08](4.601389,-6.4593053)(6.201389,-4.059305)(6.201389,-4.059305)
			\psline[linecolor=black, linewidth=0.08](6.201389,-2.4593055)(6.201389,-4.059305)(6.201389,-4.059305)
			\psline[linecolor=black, linewidth=0.08](6.201389,-4.059305)(6.201389,-5.6593056)(6.201389,-5.6593056)
			\psline[linecolor=black, linewidth=0.08](6.201389,-5.6593056)(7.4013886,-5.6593056)(7.4013886,-5.6593056)
			\psline[linecolor=black, linewidth=0.08](6.201389,-4.059305)(7.4013886,-4.059305)(7.4013886,-4.059305)
			\psline[linecolor=black, linewidth=0.08](6.201389,-2.4593055)(7.4013886,-2.4593055)(7.4013886,-2.4593055)
			\psdots[linecolor=black, dotstyle=o, dotsize=0.4, fillcolor=white](7.4013886,-2.4593055)
			\psdots[linecolor=black, dotstyle=o, dotsize=0.4, fillcolor=white](7.4013886,-4.059305)
			\psdots[linecolor=black, dotstyle=o, dotsize=0.4, fillcolor=white](7.4013886,-5.6593056)
			\psline[linecolor=black, linewidth=0.08](3.0013888,-4.059305)(3.0013888,-6.4593053)(3.0013888,-6.4593053)
			\psline[linecolor=black, linewidth=0.08](3.0013888,-4.059305)(1.4013889,-4.059305)(1.4013889,-4.059305)
			\psline[linecolor=black, linewidth=0.08](1.4013889,-4.059305)(3.0013888,-6.4593053)(3.0013888,-6.4593053)
			\psline[linecolor=black, linewidth=0.08](3.0013888,-4.059305)(1.4013889,-2.4593055)(1.4013889,-2.4593055)
			\psline[linecolor=black, linewidth=0.08](1.4013889,-2.4593055)(1.4013889,-4.059305)(1.4013889,-4.059305)
			\psline[linecolor=black, linewidth=0.08](1.4013889,-2.4593055)(0.20138885,-2.4593055)(0.20138885,-2.4593055)
			\psline[linecolor=black, linewidth=0.08](0.20138885,-4.059305)(1.4013889,-4.059305)(1.4013889,-4.059305)
			\psline[linecolor=black, linewidth=0.08](1.4013889,-4.059305)(1.4013889,-5.6593056)(1.4013889,-5.6593056)
			\psline[linecolor=black, linewidth=0.08](1.4013889,-5.6593056)(0.20138885,-5.6593056)(0.20138885,-5.6593056)
			\psline[linecolor=black, linewidth=0.08](3.0013888,-4.059305)(1.4013889,-5.6593056)(1.4013889,-5.6593056)
			\psdots[linecolor=black, dotstyle=o, dotsize=0.4, fillcolor=white](0.20138885,-2.4593055)
			\psdots[linecolor=black, dotstyle=o, dotsize=0.4, fillcolor=white](0.20138885,-4.059305)
			\psdots[linecolor=black, dotstyle=o, dotsize=0.4, fillcolor=white](0.20138885,-5.6593056)
			\psline[linecolor=black, linewidth=0.08](15.401389,-6.4593053)(17.001389,-4.059305)(17.001389,-4.059305)
			\psline[linecolor=black, linewidth=0.08](17.001389,-2.4593055)(17.001389,-4.059305)(17.001389,-4.059305)
			\psline[linecolor=black, linewidth=0.08](17.001389,-4.059305)(17.001389,-5.6593056)(17.001389,-5.6593056)
			\psline[linecolor=black, linewidth=0.08](17.001389,-5.6593056)(18.20139,-5.6593056)(18.20139,-5.6593056)
			\psline[linecolor=black, linewidth=0.08](17.001389,-4.059305)(18.20139,-4.059305)(18.20139,-4.059305)
			\psline[linecolor=black, linewidth=0.08](17.001389,-2.4593055)(18.20139,-2.4593055)(18.20139,-2.4593055)
			\psdots[linecolor=black, dotstyle=o, dotsize=0.4, fillcolor=white](18.20139,-2.4593055)
			\psdots[linecolor=black, dotstyle=o, dotsize=0.4, fillcolor=white](18.20139,-4.059305)
			\psdots[linecolor=black, dotstyle=o, dotsize=0.4, fillcolor=white](18.20139,-5.6593056)
			\psline[linecolor=black, linewidth=0.08](12.201389,-4.059305)(13.801389,-6.4593053)(13.801389,-6.4593053)
			\psline[linecolor=black, linewidth=0.08](12.201389,-2.4593055)(12.201389,-4.059305)(12.201389,-4.059305)
			\psline[linecolor=black, linewidth=0.08](12.201389,-2.4593055)(11.001389,-2.4593055)(11.001389,-2.4593055)
			\psline[linecolor=black, linewidth=0.08](11.001389,-4.059305)(12.201389,-4.059305)(12.201389,-4.059305)
			\psline[linecolor=black, linewidth=0.08](12.201389,-4.059305)(12.201389,-5.6593056)(12.201389,-5.6593056)
			\psline[linecolor=black, linewidth=0.08](12.201389,-5.6593056)(11.001389,-5.6593056)(11.001389,-5.6593056)
			\psdots[linecolor=black, dotstyle=o, dotsize=0.4, fillcolor=white](11.001389,-2.4593055)
			\psdots[linecolor=black, dotstyle=o, dotsize=0.4, fillcolor=white](11.001389,-4.059305)
			\psdots[linecolor=black, dotstyle=o, dotsize=0.4, fillcolor=white](11.001389,-5.6593056)
			\psdots[linecolor=black, dotsize=0.4](17.001389,-2.4593055)
			\psdots[linecolor=black, dotsize=0.4](17.001389,-4.059305)
			\psdots[linecolor=black, dotsize=0.4](17.001389,-5.6593056)
			\psdots[linecolor=black, dotsize=0.4](12.201389,-2.4593055)
			\psdots[linecolor=black, dotsize=0.4](12.201389,-4.059305)
			\psdots[linecolor=black, dotsize=0.4](12.201389,-5.6593056)
			\psdots[linecolor=black, dotsize=0.4](1.4013889,-2.4593055)
			\psdots[linecolor=black, dotsize=0.4](1.4013889,-4.059305)
			\psdots[linecolor=black, dotsize=0.4](1.4013889,-5.6593056)
			\psdots[linecolor=black, dotsize=0.4](6.201389,-2.4593055)
			\psdots[linecolor=black, dotsize=0.4](6.201389,-4.059305)
			\psdots[linecolor=black, dotsize=0.4](6.201389,-5.6593056)
			\psdots[linecolor=black, dotsize=0.4](14.601389,-4.059305)
			\psdots[linecolor=black, dotstyle=o, dotsize=0.4, fillcolor=white](3.0013888,-4.059305)
			\psdots[linecolor=black, dotstyle=o, dotsize=0.4, fillcolor=white](4.601389,-4.059305)
			\psdots[linecolor=black, dotstyle=o, dotsize=0.4, fillcolor=white](3.0013888,-6.4593053)
			\psdots[linecolor=black, dotstyle=o, dotsize=0.4, fillcolor=white](4.601389,-6.4593053)
			\psline[linecolor=black, linewidth=0.08](13.801389,-6.4593053)(15.401389,-6.4593053)(15.401389,-6.4593053)
			\psdots[linecolor=black, dotstyle=o, dotsize=0.4, fillcolor=white](13.801389,-6.4593053)
			\psdots[linecolor=black, dotstyle=o, dotsize=0.4, fillcolor=white](15.401389,-6.4593053)
			\psline[linecolor=black, linewidth=0.08](12.201389,-2.4593055)(14.601389,-4.059305)(14.601389,-4.059305)
			\psline[linecolor=black, linewidth=0.08](12.201389,-4.059305)(14.601389,-4.059305)(14.601389,-4.059305)
			\psline[linecolor=black, linewidth=0.08](12.201389,-5.6593056)(14.601389,-4.059305)(14.601389,-4.059305)
			\psline[linecolor=black, linewidth=0.08](17.001389,-2.4593055)(14.601389,-4.059305)(14.601389,-4.059305)
			\psline[linecolor=black, linewidth=0.08](14.601389,-4.059305)(17.001389,-4.059305)(17.001389,-4.059305)
			\psline[linecolor=black, linewidth=0.08](14.601389,-4.059305)(17.001389,-5.6593056)(17.001389,-5.6593056)
			\end{pspicture}
		}
	\end{center}
	\caption{Haj\'{o}s construction of $G_1$ and $G_2$.} \label{fig:hajos-up}
\end{figure}

\begin{remark}
	The lower bounds in Theorem \ref{thm:Hajos} is tight. 	Consider Figure \ref{fig:hajos-low}. One can easily check that the set of black vertices in each graph is a strong dominating set of that and the equality holds. This idea can be generalized and therefore there is an infinite family of graphs such that the equality of the lower bound holds. Also, the upper bounds in Theorem \ref{thm:Hajos} is tight. 	Consider Figure \ref{fig:hajos-up}. By an easy argument, the set of black vertices in each graph is a strong dominating set of that and the equality of the upper bound holds. Since this idea can be generalized, then there is an infinite family of graphs such that the equality of the upper bound holds.
\end{remark}

\section{Vertex-Sum}
In this section, we focus on the strong domination number  of vertex-sum graphs. Given  disjoint graphs $G_1,\ldots,G_k$ with $u_i\in V(G_i)$, $i=1,\ldots,k$, the vertex-sum of $G_1,\ldots,G_k$,  at the vertices $u_1,\ldots,u_k$, is the graph $G_1\stackplus{u} G_2 \stackplus{u} \cdots \stackplus{u}  G_k$ obtained from $G_1,\ldots,G_k$ by identifying the vertices $u_i$, $i=1,\ldots,k$, as the same vertex $u$. This definition is from~\cite{Barioli2004} by Barioli, Fallat and Hogben. We call $u$ the \textit{central vertex} of the vertex-sum.	The vertex-sum of $t$ copies of a graph $G$ at a vertex $u$ is denoted by $G_u^t$, $t \geq 2$.	For the sake of simplicity,	we may assume that the vertex $u$ belongs to all the  $G_i$. Recently the distinguishing number and
the distinguishing threshold of some vertex-sum graphs studied in \cite{IJST}. The following theorem gives the lower bound and the upper bound for the strong domination number of vertex-sum of two graphs.  

	\begin{theorem}\label{thm:v-sum}
For the vertex-sum of disjoint graphs $G_1,G_2,\ldots,G_k$ with $u_i\in V(G_i)$, $i=1,2,\ldots,k$, we have
$$\left( \sum_{i=1}^{k}\gst(G_i)-\deg(u_i)\right) +1 \leq  \gst(G_1\underset{u}{+} G_2 \underset{u}{+} \ldots \underset{u}{+} G_k) 
\leq   \left( \sum_{i=1}^{k}\gst(G_i)\right)+1.  $$	
	\end{theorem}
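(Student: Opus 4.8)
The plan is to mimic the structure of the proof of Theorem~\ref{thm:Hajos}, but now organized around the single central vertex $u$ rather than the fused vertex $v_H(x_1x_2)$ and the new edge $y_1y_2$. Write $G = G_1 \underset{u}{+} \cdots \underset{u}{+} G_k$ and note that $\deg_G(u) = \sum_{i=1}^k \deg_{G_i}(u_i)$, while the degree of every other vertex is unchanged: $\deg_G(w) = \deg_{G_i}(w)$ for $w \in V(G_i) \setminus \{u\}$. These two facts are the backbone of every estimate.

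For the upper bound $\gst(G) \le \left(\sum_{i=1}^k \gst(G_i)\right) + 1$, I would pick a $\gst$-set $D_i$ of each $G_i$ and form
$$D = \{u\} \cup \bigcup_{i=1}^k \left(D_i \setminus \{u_i\}\right).$$
This has size at most $\sum_i \gst(G_i) + 1$ (the $+1$ from adding $u$, and the discard of each $u_i$ can only help). I then argue $D$ is a strong dominating set of $G$: take $w \in V(G) \setminus D$, say $w \in V(G_i)$; in $G_i$, $w$ was strong dominated by some $y \in D_i$. If $y \ne u_i$ then $y \in D$ and the degree inequality still holds since degrees off $u$ are unchanged (and if $y = u_i$ is how $w$ was dominated in $G_i$, then $u \in D$ with $\deg_G(u) \ge \deg_{G_i}(u_i) \ge \deg_{G_i}(w) = \deg_G(w)$, so $u$ strong dominates $w$). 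The only subtlety is whether $u \notin D$ could ever be forced — but we put $u$ into $D$ outright, so there is nothing to check there; $u$ itself is in $D$.

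For the lower bound, let $S$ be a $\gst$-set of $G$. The idea is to cut $S$ into pieces over the $G_i$ and repair each piece into a strong dominating set of $G_i$ at a bounded cost. For each $i$ set $S_i^{0} = (S \cap V(G_i)) \setminus \{u\}$. The defect is: (a) possibly $u \in S$ and $u$ was used to strong dominate some vertices of $G_i$, and (b) possibly $u \notin S$ and vertices of $G_i$ adjacent to $u$ were being strong dominated by $u$ in $G$, or $u$ itself was strong dominated through another branch $G_j$. In case (a), I would set $S_i = S_i^{0} \cup \{u_i\}$ when $u_i$ can still strong dominate in $G_i$ everything it dominated in $G$ (which happens exactly when $\deg_{G_i}(u_i)$ dominates the relevant neighbours), and otherwise $S_i = S_i^{0} \cup N_{G_i}(u_i)$, which certainly strong dominates all of $N_{G_i}(u_i)$ and costs at most $\deg_{G_i}(u_i)$ extra vertices; similarly in case (b) add $N_{G_i}(u_i)$ (or just $\{u_i\}$ if that suffices) to cover $u_i$'s neighbours and, if needed, add $u_i$ once so that $u_i$ itself is dominated inside $G_i$. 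In all cases $|S_i| \le |S \cap V(G_i)| - [u \in S] + \deg(u_i)$ roughly, and summing,
$$\sum_{i=1}^k \gst(G_i) \le \sum_{i=1}^k |S_i| \le |S| + \sum_{i=1}^k \deg(u_i) - (k-1),$$
where the $-(k-1)$ accounts for the fact that if $u \in S$ it is counted once in $|S|$ but we removed it from all $k$ pieces (so we save $k-1$), and if $u \notin S$ a matching bookkeeping still gives at least this. Rearranging yields $\gst(G) = |S| \ge \sum_i \gst(G_i) - \sum_i \deg(u_i) + 1$, as claimed. The main obstacle will be making the case analysis for the lower bound genuinely exhaustive and getting the constant exactly right: one must carefully track, across the $k$ branches, whether $u \in S$, whether $u$ does any strong-dominating work, and whether $u$ needs covering from outside, and check that the "repair" sets $N_{G_i}(u_i)$ never have to be added in a way that overshoots the bound $\sum \deg(u_i)$. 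The edge cases where some $u_i$ is pendant, or where $u$ has small degree and so fails to dominate its neighbours even in $G$, are where the estimates are tightest and need the most care.
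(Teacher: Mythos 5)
Your proposal follows essentially the same route as the paper: the identical $D=\bigcup_i D_i\cup\{u\}$ construction for the upper bound, and for the lower bound the same restriction of a $\gst$-set $S$ to each branch, repaired by adding either $u_i$ or $N_{G_i}(u_i)$ according to whether $\deg_{G_i}(u_i)$ still dominates the degrees of its neighbours, with the same case split on $u\in S$ versus $u\notin S$. The only blemish is the intermediate inequality $\sum_i|S_i|\le |S|+\sum_i\deg(u_i)-(k-1)$: in the worst case ($u\in S$ and every branch repaired by $N_{G_i}(u_i)$) the saving is only $1$, not $k-1$, but that weaker (and correct) inequality already rearranges to the claimed lower bound, so your conclusion stands.
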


	\begin{proof}
First we find the upper bound. Suppose that $D_i$ is a $\gst$-set of $G_i$, for $i=1,2,\ldots,k$. Then clearly 
$$D=\bigcup\limits_{i=1}^{k} D_i \cup \{u\},$$
is a strong dominating set of $G_1\underset{u}{+} G_2 \underset{u}{+} \ldots \underset{u}{+} G_k$, and we are done. 
Now,  we consider the lower bound and prove it. Suppose that $S$ is a $\gst$-set of $G_1\underset{u}{+} G_2 \underset{u}{+} \ldots \underset{u}{+} G_k$. We find strong dominating sets of $G_i$, for $i=1,2,\ldots,k$, based on $S$. We have two cases:
\begin{itemize}
\item[(i)]
$u\notin S$. Then there exists $u'\in S$ and is strong dominating $u$. Without loss of generality, suppose that $u'\in V(G_1)$. Then one can easily check that 
$$S_1=S\setminus \left(\bigcup\limits_{i=2}^{k} V(G_i)\right)$$ 
is a strong dominating set of $G_1$, and for  $i=2,3,\ldots,k$, 
$$S_i=\Big(S\cup\{u_i\}\Big)\setminus \left(\bigcup\limits_{\underset{j\neq i}{j=1}}^{k} V(G_j)\right) $$
is a strong dominating set of $G_i$. So we have 
$$ \sum_{i=1}^{k}\gst(G_i) \leq  \gst(G_1\underset{u}{+} G_2 \underset{u}{+} \ldots \underset{u}{+} G_k)+k-1,$$
which is not in contradiction of the lower bound.  
\item[(ii)]
$u\in S$. If after forming each $G_i$, for all $i=1,2,\ldots,k$, $\deg(u_i)\geq \max \{\deg(v)~|~ v\in N(u_i)\}$, then 
$$S_i=\Big(S\cup\{u_i\}\Big)\setminus \left(\bigcup\limits_{\underset{j\neq i}{j=1}}^{k} V(G_j)\cup\{u\}\right) $$
is a strong dominating set of $G_i$, for $i=1,2,\ldots,k$. So we have 
$$ \sum_{i=1}^{k}\gst(G_i) \leq  \gst(G_1\underset{u}{+} G_2 \underset{u}{+} \ldots \underset{u}{+} G_k)+k-1,$$
which is not in contradiction of the lower bound. The worst case happens when after forming each $G_i$, for all $i=1,2,\ldots,k$, $\deg(u_i) < \max \{\deg(v)~|~ v\in N(u_i)\}$.  Then by considering
$$S_i=\Big(S\cup N(u_i)\Big)\setminus \left(\bigcup\limits_{\underset{j\neq i}{j=1}}^{k} V(G_j)\cup\{u\}\right), $$
one can easily check that $S_i$ is a a strong dominating set of $G_i$, for $i=1,2,\ldots,k$. So we have
$$ \sum_{i=1}^{k}\gst(G_i) \leq  \gst(G_1\underset{u}{+} G_2 \underset{u}{+} \ldots \underset{u}{+} G_k)+\left(\sum_{i=1}^{k}\deg(u_i)\right) -1.$$
\end{itemize}
Therefore we have the result.
	\end{proof}

As an immediate result of Theorem \ref{thm:v-sum}, we have:

\begin{corollary}
For the The vertex-sum of $t$ copies of a graph $G$ at a vertex $u$, we have 
$$t\big(\gst(G)-\deg(u)\big) +1 \leq  \gst(G_u^t) \leq   t\gst(G)+1.  $$	
\end{corollary}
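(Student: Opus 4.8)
The plan is to obtain the statement as a direct specialization of Theorem~\ref{thm:v-sum}. Recall that $G_u^t$ is by definition the vertex-sum of $t$ copies of $G$ at the vertex $u$; so I would take $k=t$ vertex-disjoint isomorphic copies $G_1,\dots,G_t$ of $G$ and, for each $i$, let $u_i\in V(G_i)$ be the image of $u$ under the $i$-th isomorphism. With this choice the hypotheses of Theorem~\ref{thm:v-sum} are met verbatim, and $G_1\underset{u}{+}G_2\underset{u}{+}\cdots\underset{u}{+}G_k$ is precisely $G_u^t$.

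Next I would substitute into the two bounds of Theorem~\ref{thm:v-sum}. Since each summand is a copy of $G$ we have $\gst(G_i)=\gst(G)$ and $\deg_{G_i}(u_i)=\deg_G(u)$ for every $i=1,\dots,t$. Hence the upper bound becomes
$$\gst(G_u^t)\le\Big(\sum_{i=1}^{t}\gst(G_i)\Big)+1=t\,\gst(G)+1,$$
and the lower bound becomes
$$\gst(G_u^t)\ge\Big(\sum_{i=1}^{t}\big(\gst(G_i)-\deg(u_i)\big)\Big)+1=t\big(\gst(G)-\deg(u)\big)+1.$$
Chaining the two inequalities yields exactly the claimed estimate.

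There is essentially no obstacle here: the only thing worth a sentence is the passage from "$k$ distinct disjoint graphs" in the statement of Theorem~\ref{thm:v-sum} to "$t$ copies of one graph," which is handled simply by working with vertex-disjoint isomorphic copies as above, so that the degrees and strong domination numbers appearing in the general bound are all equal. Consequently the proof is a one-line appeal to Theorem~\ref{thm:v-sum} after collecting like terms.
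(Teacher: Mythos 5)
Your proposal is correct and matches the paper exactly: the paper presents this corollary as an immediate consequence of Theorem~\ref{thm:v-sum}, obtained by taking $k=t$ vertex-disjoint copies of $G$ so that $\gst(G_i)=\gst(G)$ and $\deg(u_i)=\deg(u)$ for every $i$. Nothing further is needed.
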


\begin{remark}
Bounds in Theorem \ref{thm:v-sum} are tight. For the upper bound, consider $G_i$ as shown in Figure \ref{fig:v-sum-upper}. The set of black vertices is a $\gst$-set of $G_i$. Now, if we consider $G_1\underset{u}{+} G_2 \underset{u}{+} \ldots \underset{u}{+} G_k$, then we need all black vertices and $u$ in our strong dominating set. Therefore the equality holds. By generalizing this idea, we have an infinite family of graphs such that the equality of the upper bound holds. For the lower bound, consider $G_i$ as shown in Figure \ref{fig:v-sum-lower}. The set of black vertices, say $S_i$, is a $\gst$-set of $G_i$. Now, if we consider $G_1\underset{u}{+} G_2 \underset{u}{+} \ldots \underset{u}{+} G_k$, then clearly $ \left(\bigcup\limits_{i=1}^{k} S_i\cup\{u\}\right)\setminus \left(\bigcup\limits_{i=1}^{k} N(u_i)\right)$ is a $\gst$-set, and we are done. By generalizing this idea, we have an infinite family of graphs such that the equality of the lower bound holds.
\end{remark}

\begin{figure}
\begin{center}
\psscalebox{0.6 0.6}
{
\begin{pspicture}(0,-5.025)(6.8027782,-0.155)
\psline[linecolor=black, linewidth=0.08](0.20138885,-3.965)(1.4013889,-2.765)(2.601389,-3.965)(2.601389,-3.965)
\psline[linecolor=black, linewidth=0.08](1.4013889,-2.765)(3.401389,-0.765)(5.4013886,-2.765)(5.4013886,-2.765)
\psline[linecolor=black, linewidth=0.08](5.4013886,-2.765)(4.201389,-3.965)(4.201389,-3.965)
\psline[linecolor=black, linewidth=0.08](5.4013886,-2.765)(6.601389,-3.965)(6.601389,-3.965)
\psdots[linecolor=black, dotsize=0.4](1.4013889,-2.765)
\psdots[linecolor=black, dotsize=0.4](5.4013886,-2.765)
\psdots[linecolor=black, dotstyle=o, dotsize=0.4, fillcolor=white](0.20138885,-3.965)
\psdots[linecolor=black, dotstyle=o, dotsize=0.4, fillcolor=white](2.601389,-3.965)
\psdots[linecolor=black, dotstyle=o, dotsize=0.4, fillcolor=white](4.201389,-3.965)
\psdots[linecolor=black, dotstyle=o, dotsize=0.4, fillcolor=white](6.601389,-3.965)
\psdots[linecolor=black, dotstyle=o, dotsize=0.4, fillcolor=white](3.401389,-0.765)
\rput[bl](3.2013888,-0.405){$u_i$}
\rput[bl](3.2013888,-5.025){\Large{$G_i$}}
\end{pspicture}
}
\end{center}
\caption{Graph $G_i$, for $i=1,2,\ldots,k$.} \label{fig:v-sum-upper}
\end{figure}

\begin{figure}
\begin{center}
\psscalebox{0.6 0.6}
{
\begin{pspicture}(0,-8.155)(9.202778,-2.665)
\rput[bl](4.441389,-2.915){$u_i$}
\rput[bl](4.461389,-8.155){\Large{$G_i$}}
\psline[linecolor=black, linewidth=0.08](1.4013889,-4.855)(0.20138885,-6.055)(0.20138885,-6.055)
\psline[linecolor=black, linewidth=0.08](1.4013889,-4.855)(1.0013889,-6.055)(1.0013889,-6.055)
\psline[linecolor=black, linewidth=0.08](1.4013889,-4.855)(1.8013889,-6.055)(1.8013889,-6.055)
\psline[linecolor=black, linewidth=0.08](1.4013889,-4.855)(2.601389,-6.055)(2.601389,-6.055)
\psline[linecolor=black, linewidth=0.08](4.601389,-4.855)(3.401389,-6.055)(3.401389,-6.055)
\psline[linecolor=black, linewidth=0.08](4.601389,-4.855)(4.201389,-6.055)(4.201389,-6.055)(4.201389,-6.055)
\psline[linecolor=black, linewidth=0.08](4.601389,-4.855)(5.001389,-6.055)(5.001389,-6.055)
\psline[linecolor=black, linewidth=0.08](4.601389,-4.855)(5.8013887,-6.055)(5.8013887,-6.055)
\psline[linecolor=black, linewidth=0.08](7.8013887,-4.855)(6.601389,-6.055)(6.601389,-6.055)
\psline[linecolor=black, linewidth=0.08](7.8013887,-4.855)(7.4013886,-6.055)(7.4013886,-6.055)
\psline[linecolor=black, linewidth=0.08](7.8013887,-4.855)(8.201389,-6.055)(8.201389,-6.055)
\psline[linecolor=black, linewidth=0.08](7.8013887,-4.855)(9.001389,-6.055)(9.001389,-6.055)
\psline[linecolor=black, linewidth=0.08](0.20138885,-6.055)(0.20138885,-7.255)(0.20138885,-7.255)
\psline[linecolor=black, linewidth=0.08](1.0013889,-6.055)(1.0013889,-7.255)(1.0013889,-7.255)
\psline[linecolor=black, linewidth=0.08](1.8013889,-6.055)(1.8013889,-7.255)(1.8013889,-7.255)
\psline[linecolor=black, linewidth=0.08](2.601389,-6.055)(2.601389,-7.255)(2.601389,-7.255)
\psline[linecolor=black, linewidth=0.08](3.401389,-6.055)(3.401389,-7.255)(3.401389,-7.255)
\psline[linecolor=black, linewidth=0.08](4.201389,-6.055)(4.201389,-7.255)(4.201389,-7.255)
\psline[linecolor=black, linewidth=0.08](5.001389,-6.055)(5.001389,-7.255)(5.001389,-7.255)
\psline[linecolor=black, linewidth=0.08](5.8013887,-6.055)(5.8013887,-7.255)(5.8013887,-7.255)
\psline[linecolor=black, linewidth=0.08](4.601389,-4.855)(4.601389,-3.255)(4.601389,-3.255)
\psline[linecolor=black, linewidth=0.08](1.4013889,-4.855)(4.601389,-3.255)(4.601389,-3.255)
\psline[linecolor=black, linewidth=0.08](4.601389,-3.255)(7.8013887,-4.855)(7.8013887,-4.855)
\psline[linecolor=black, linewidth=0.08](6.601389,-7.255)(6.601389,-6.055)(6.601389,-6.055)
\psline[linecolor=black, linewidth=0.08](7.4013886,-6.055)(7.4013886,-7.255)
\psline[linecolor=black, linewidth=0.08](8.201389,-6.055)(8.201389,-7.255)
\psline[linecolor=black, linewidth=0.08](9.001389,-6.055)(9.001389,-7.255)(9.001389,-7.255)
\psdots[linecolor=black, dotstyle=o, dotsize=0.4, fillcolor=white](4.601389,-3.255)
\psdots[linecolor=black, dotstyle=o, dotsize=0.4, fillcolor=white](0.20138885,-7.255)
\psdots[linecolor=black, dotstyle=o, dotsize=0.4, fillcolor=white](1.0013889,-7.255)
\psdots[linecolor=black, dotstyle=o, dotsize=0.4, fillcolor=white](1.8013889,-7.255)
\psdots[linecolor=black, dotstyle=o, dotsize=0.4, fillcolor=white](2.601389,-7.255)
\psdots[linecolor=black, dotstyle=o, dotsize=0.4, fillcolor=white](3.401389,-7.255)
\psdots[linecolor=black, dotstyle=o, dotsize=0.4, fillcolor=white](4.201389,-7.255)
\psdots[linecolor=black, dotstyle=o, dotsize=0.4, fillcolor=white](5.001389,-7.255)
\psdots[linecolor=black, dotstyle=o, dotsize=0.4, fillcolor=white](5.8013887,-7.255)
\psdots[linecolor=black, dotstyle=o, dotsize=0.4, fillcolor=white](6.601389,-7.255)
\psdots[linecolor=black, dotstyle=o, dotsize=0.4, fillcolor=white](7.4013886,-7.255)
\psdots[linecolor=black, dotstyle=o, dotsize=0.4, fillcolor=white](8.201389,-7.255)
\psdots[linecolor=black, dotstyle=o, dotsize=0.4, fillcolor=white](9.001389,-7.255)
\psdots[linecolor=black, dotsize=0.4](1.4013889,-4.855)
\psdots[linecolor=black, dotsize=0.4](4.601389,-4.855)
\psdots[linecolor=black, dotsize=0.4](7.8013887,-4.855)
\psdots[linecolor=black, dotsize=0.4](0.20138885,-6.055)
\psdots[linecolor=black, dotsize=0.4](1.0013889,-6.055)
\psdots[linecolor=black, dotsize=0.4](1.8013889,-6.055)
\psdots[linecolor=black, dotsize=0.4](2.601389,-6.055)
\psdots[linecolor=black, dotsize=0.4](3.401389,-6.055)
\psdots[linecolor=black, dotsize=0.4](4.201389,-6.055)
\psdots[linecolor=black, dotsize=0.4](5.001389,-6.055)
\psdots[linecolor=black, dotsize=0.4](5.8013887,-6.055)
\psdots[linecolor=black, dotsize=0.4](6.601389,-6.055)
\psdots[linecolor=black, dotsize=0.4](7.4013886,-6.055)
\psdots[linecolor=black, dotsize=0.4](8.201389,-6.055)
\psdots[linecolor=black, dotsize=0.4](9.001389,-6.055)
\end{pspicture}
}
\end{center}
\caption{Graph $G_i$, for $i=1,2,\ldots,k$.} \label{fig:v-sum-lower}
\end{figure}



\begin{thebibliography}{99}


	\bibitem{euro}  S. Akbari,  S. Alikhani, Y.H. Peng, {   Characterization of
		graphs using domination polynomial},  {\it Europ. J. Combin.},  {\bf 31} (2010)  1714-1724.
	
	
	\bibitem{Emeric} S. Alikhani, E. Deutsch, More on domination polynomial and domination root, {\it Ars Combin.}  {\bf 134} (2017) 215--232. 
	
	\bibitem{sub} S. Alikhani, N. Ghanbari, H. Zaherifar, Strong domination number of some operations on a graph, submitted. Available at
	\texttt{https://arxiv.org/abs/2210.11120.}
	
	
	
	

	
	
	\bibitem{saeid1} S. Alikhani, Y.H. Peng, {  Introduction to domination polynomial of a graph}, {\it Ars Combin}. {\bf 114} (2014)  257-266.
	
\bibitem{Barioli2004}	F. Barioli, S. Fallat, and L. Hogben. Computation of minimal rank and path cover number
	for certain graphs, {\it Linear Algebra Appl}. {\bf 392} (2004) 289-–303. 
	
	
	\bibitem{Boutrig} R. Boutrig, M. Chellali, { A note on a relation between the weak and strong
		domination numbers of a graph}, {\it Opuscula Math}. {\bf 32} (2012) 235-238.
	
	
	
	\bibitem{HAJOSSUM} G. Haj\'{o}s, {\"U}ber eine Konstruktion nicht $n$-f{\"a}rbbarer Graphen, {\it Wiss. Z. Martin-Luther-Univ. Halle-Wittenberg Math.-Natur. Reihe\/}, 10 (1961) 116--117.
	
	
	
	\bibitem{domination}  T.W. Haynes, S.T. Hedetniemi, P.J. Slater,   {\it Fundamentals of domination in graphs}, Marcel Dekker, NewYork  (1998).
	
	
	\bibitem{DM2}   D. Rautenbach,   {Bounds on the strong domination number graphs}, {\it Discrete Math.}, {\bf 215} (2000) 201-212.
	
	
	
	\bibitem{DM} E. Sampathkumar, L.Pushpa Latha, {Strong weak domination and domination balance in a graph}, {\it Discrete Math}. {\bf 161} (1) (1996) 235-242.
	
	\bibitem{IJST} M.H. Shekarriz, S.A. Talebpour, B. Ahmadi, M.H. Shirdareh Haghighi, S. Alikhani, Distinguishing threshold for some graph operations, {\it Iran J. Sci. Technol. Trans. Sci.}, \texttt{https://doi.org/10.1007/s40995-022-01379-2}. 
	
	\bibitem{utilitas} S.K. Vaidya, R.N. Mehta,  { Strong domination number of some cycle related graphs,
	}		{\it Int. J. Math.} {\bf 3} (2017)  72-80.
	
	\bibitem{weakly} S.K. Vaidya, S.H. Karkar, {On Strong domination number of graphs},  {\it Saurashtra University, India} {\bf 12} (2017) 604-612.
	
	\bibitem{JAS}  H. Zaherifar, S. Alikhani, N. Ghanbari, { On the strong dominating sets of graphs}, {\it J. Alg. Sys.}, {\bf 11} (1) (2023) 65-76. 


\end{thebibliography}
\end{document}